\documentclass[11pt,reqno]{amsart}
\usepackage[top=1.2in, bottom=1.2in, left=1.2in, right=1.2in]{geometry}
\usepackage{amsfonts, amssymb, amsmath, mathtools, dsfont}
\usepackage[linktoc=page, colorlinks, linkcolor=blue, citecolor=blue]{hyperref}
\usepackage{enumerate}

\usepackage{xcolor}

\usepackage{graphicx}

\usepackage{subcaption} 
\usepackage[font=small]{caption} 
\numberwithin{equation}{section}

\DeclareMathOperator{\E}{\mathbb{E}} 
\DeclareMathOperator{\supp}{supp} 
\DeclareMathOperator{\tr}{tr} 

\DeclarePairedDelimiter \abs{\lvert}{\rvert} 
\DeclarePairedDelimiter \norm{\lVert}{\rVert} 
\DeclarePairedDelimiterX \ip[2]{\langle}{\rangle}{#1,#2} 
\DeclarePairedDelimiterXPP \Prob[1]{\mathbb{P}}\{\}{}{
   
   #1} 
\DeclarePairedDelimiterXPP \Probevent[1]{\mathbb{P}}(){}{
   
   #1} 
 
\def \R {\mathbb{R}}

\def \I {\mathcal{I}}

\def \e {\varepsilon}

\def \l {\lambda}

\def \tran {\mathsf{T}}

\newtheorem{theorem}{Theorem}[section]
\newtheorem{proposition}[theorem]{Proposition}
\newtheorem{corollary}[theorem]{Corollary}
\newtheorem{lemma}[theorem]{Lemma}

\newtheorem{definition}[theorem]{Definition}

\theoremstyle{remark}

\usepackage{dirtytalk}

\title{Discrepancy and Fisher information}

\author{Gleb Smirnov}
\address{Mathematical Sciences Institute, 
Australian National University, Canberra, Australia}
\email{gleb.smirnov@anu.edu.au}

\author{Roman Vershynin}
\address{Department of Mathematics, University of California, Irvine, U.S.A.}
\email{rvershyn@uci.edu}

\thanks{R.V. is partially supported by the NSF Grant DMS 2451011 and the U.S. Air Force Grant FA9550-25-1-0294.
}
 
\begin{document}

\maketitle

\begin{abstract}
We give an online algorithm that keeps a symmetric random walk inside a convex body by discarding some of its steps. The expected number of discarded steps is controlled by a Fisher-information-type quantity associated with the body. For the cube, this gives a dimension-free bound: a walk with unit Euclidean steps can be kept bounded in all coordinates while discarding only a small constant fraction of the steps on average.
\end{abstract}

\section{Introduction}

\subsection{Random walks}

Consider a symmetric random walk in $\R^d$ with given steps:
$$
S_k = \sum_{j=1}^k \e_j v_j, \quad 
k=1,2, \ldots
$$
where $\e_j = \pm 1$ are independent Rademacher random variables and $v_j \in \R^d$ are given vectors. If the steps $v_j$ have unit length, the random walk drifts arbitrarily far from the origin over time. In particular, it exits any given compact set $K \subset \R^d$ in finite time almost surely.

{\em At what cost can one keep the random walk inside $K$ indefinitely? In particular, can this be achieved by discarding only a small fraction of the steps?}

\subsection{Fisher information}

A possible answer can be formulated in terms of the information content of $K$. A fundamental notion in statistics is the Fisher information. For a family of probability densities $p_\theta(\cdot)$ on $\R^d$, parametrized by a vector $\theta \in \R^m$, the {\em Fisher information matrix} is a symmetric, positive-semidefinite $m \times m$ matrix defined by
$$
\I(\theta)
\coloneqq \E \big( \nabla_\theta \log p_\theta(X) \big) \big( \nabla_\theta \log p_\theta(X) \big)^\tran, 
\quad \text{where} \quad  X \sim p_\theta.
$$
The random vector $\nabla_\theta \log p_\theta(X)$ is called the {\em score}. 
It measures the sensitivity of the log-likelihood $\log p_\theta(X)$ to 
infinitesimal changes in the parameter $\theta$. Under the usual regularity 
assumptions, the score has mean zero. Hence $\I(\theta)$ is the covariance 
matrix of the score.

Thus, for any given unit vector $v \in \R^m$, the quadratic form 
$$
v^\tran \I(\theta) v
$$ 
is the variance of the score in the direction of $v$. It quantifies how much information the data carries about the parameter $\theta$ along that direction.

As an example, let's fix one probability density $\pi(\cdot)$ and consider the family of all translates: 
$$
p_\theta(x) = \pi(x+\theta), \quad \theta \in \R^d.
$$
Then the Fisher information matrix, evaluated at the origin, is
\begin{equation}	\label{eq: Ipi}
	\I_\pi 
	\coloneqq \I(0)
	= \E \big( \nabla \log \pi(X) \big) \big( \nabla \log \pi(X) \big)^\tran, 
	\quad \text{where} \quad  X \sim \pi.
\end{equation}

\subsection{Discrepancy}

The following is our main result.
\begin{theorem}	 \label{thm: main}
Let $K \subset \R^d$ be a convex, centrally symmetric set with nonempty interior. Let $v_1,\dots,v_n \in \R^d$ be fixed vectors, and let $\e_1,\ldots, \e_n$ be independent Rademacher random variables. Let $\pi(\cdot)$ be a probability density supported in the interior of $K$. Then there exists an online algorithm that discards some steps $\e_j v_j$ such that, for each $k$,
	$$
	\sum_{j \le k \text{ accepted}} \e_j v_j \in 2K
	\quad \text{deterministically},
	$$
	and
	\begin{equation}	\label{eq: Ediscarded}
		\E \#\{\text{discarded}\}
		\le \frac{1}{2} \sum_{j=1}^n \sqrt{v_j^\tran \I_\pi v_j}, 
	\end{equation}
	where $\I_\pi$ denotes the Fisher information matrix defined in \eqref{eq: Ipi}.
\end{theorem}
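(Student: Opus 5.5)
The plan is to reduce the problem to a Markov chain on $K$ that leaves $\pi$ invariant. The algorithm draws one auxiliary random point $Z_0\sim\pi$, independent of the $\e_j$, and maintains a point $Z_k$ that always lies in $\supp\pi\subset K$ and satisfies
$$
\sum_{j\le k\text{ accepted}}\e_jv_j = Z_k - Z_0 .
$$
Then the partial sum lies in $K-K=2K$ deterministically, because $K$ is convex and centrally symmetric. To keep this identity, set $Z_k=Z_{k-1}+\e_kv_k$ if step $k$ is accepted and $Z_k=Z_{k-1}$ if it is discarded. The acceptance rule is a Metropolis rule with the symmetric proposal $z\mapsto z+\e_kv_k$: given $Z_{k-1}=z$ and $\e_k$, accept with probability $\min\bigl(1,\pi(z+\e_kv_k)/\pi(z)\bigr)$, using fresh independent randomness. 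This is online, since it uses only the current state and the current step. A step is never accepted when $\pi(z+\e_kv_k)=0$, so $Z_k$ stays in $\supp\pi$.

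Next I show by induction that $Z_k\sim\pi$ for every $k$. The state $Z_{k-1}$ depends only on $Z_0$, on $\e_1,\dots,\e_{k-1}$, and on earlier auxiliary coins, so $\e_k$ is an independent symmetric sign. The proposal kernel $z\mapsto z\pm v_k$, each with probability $1/2$, is symmetric. The usual detailed balance computation, $\pi(z)\min(1,\pi(z')/\pi(z))=\min(\pi(z),\pi(z'))$, then shows that the Metropolis kernel preserves $\pi$.

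The key computation is the discard probability at step $k$. With $Z_{k-1}\sim\pi$ it equals
$$
\frac12\sum_{\e=\pm1}\int\bigl(\pi(z)-\pi(z+\e v_k)\bigr)_+\,dz=\frac12\,\norm{\pi-\pi(\cdot+v_k)}_{L^1},
$$
using $\int(\pi-\pi_{\e v})_+=\tfrac12\norm{\pi-\pi_{\e v}}_1$ and translation invariance of Lebesgue measure for the $\e=-1$ term. For smooth $\pi$ I write $\pi(z+v)-\pi(z)=\int_0^1\ip{\nabla\pi(z+tv)}{v}\,dt$, integrate in $z$, and again use translation invariance. This gives
$$
\norm{\pi-\pi(\cdot+v)}_1\le\int\abs{\ip{\nabla\pi}{v}}=\E\abs{\ip{\nabla\log\pi(X)}{v}}\le\sqrt{v^\tran\I_\pi v},
$$
where the last step is Cauchy–Schwarz. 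Summing over $k$ gives \eqref{eq: Ediscarded}.

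The main obstacle is regularity. If $\I_\pi$ is infinite in a relevant direction, the bound is trivial. Otherwise I would work in the setting where the Fisher information is defined, meaning $\pi$ is locally absolutely continuous along lines with $\nabla\pi=\pi\nabla\log\pi$ integrable. There the line-integral formula for $\pi(z+v)-\pi(z)$ holds for a.e.\ $z$ by Fubini, and the $L^1$ estimate goes through without mollifying. The remaining care points are measurability of the acceptance rule and the convention $0/0$ in the Metropolis ratio, which is harmless since $\pi(Z_{k-1})>0$ almost surely.
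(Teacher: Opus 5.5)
Your proposal is correct and follows the paper's proof essentially step for step. The shared steps are:
\begin{enumerate}
\item the Metropolis chain with symmetric proposal $z\mapsto z+\e_k v_k$, started from $w_0\sim\pi$;
\item invariance of $\pi$ under that chain;
\item the identity $\sum_{\text{accepted}}\e_j v_j = w_k-w_0\in K-K=2K$;
\item the discard probability $\frac12\int\abs{\pi(x+v)-\pi(x)}\,dx$;
\item the line-integral and Cauchy--Schwarz bound by $\sqrt{v^\tran\I_\pi v}$.
\end{enumerate}
Your remarks on regularity (absolute continuity along lines, and the convention $0/0$ in the acceptance ratio) are extra care that the paper leaves implicit. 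They do not change the argument.
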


The proof will be given in Section~\ref{s: main theorem proof} and is based on the Metropolis algorithm with stationary distribution $\pi$. Intuitively, when $v_j^\tran \I_\pi v_j$ is small, the density $\pi$ is only weakly affected by the next step $\e_j v_j$, so this step is unlikely to be rejected by the Metropolis rule.

We will now explain how to choose a suitable \(\pi\) in the theorem, and then specialize it to the case of the cube $K = [-T,T]^d$. 

\subsection{Information capacity}

To get a more handy bound in Theorem~\ref{thm: main}, note that 
$$
v_j^\tran \I_\pi v_j 
\le \norm{\I_\pi} \norm{v_j}_2^2,
$$
where $\norm{\I_\pi}$ denotes the operator norm of $\I_\pi$. Now we can optimize the density $\pi$, and get: 
$$
\E \#\{\text{discarded}\}
\le \frac{1}{2} I(K) \sum_{j=1}^n \norm{v_j}_2,
$$ 
where $I(K)$ is the ``information capacity'' of $K$, defined as
$$
I(K) \coloneqq \inf_{\supp(\pi) \subset K} \norm{\I_\pi}^{\frac12}.
$$
The infimum is over smooth probability densities $\pi$ supported in the interior of $K$.

\subsection{Dirichlet eigenvalues}

It may not be easy to compute (approximately or exactly) the information content 
$I(K)$ for a given domain $K$. The operator norm is not an easy quantity to handle, and it is not clear how to find a density $\pi$ that makes it small. To get around this problem, let's instead of the operator norm consider a simpler 
quantity -- trace. Assume that
$$
\pi = \psi^2
$$
for some function $\psi$ supported in the interior of $K$. Hence we have
$$
\int_K \psi^2 = \int_K \pi = 1 
\quad \text{and} \quad 
\nabla \log \pi (x) = \frac{2 \nabla \psi(x)}{\psi(x)}.
$$
Substitute $x=X$ to express the Fisher information matrix \eqref{eq: Ipi} as
\begin{equation}	\label{eq: I pi psi}
	\I_\pi =  4 \int_K \nabla \psi(x) \nabla \psi(x)^\tran \, dx.
\end{equation}
Then 
\begin{equation}	\label{eq: tr I pi}
	\tr \I_\pi = 4 \int_K \norm{\nabla \psi(x)}_2^2 \, dx.
\end{equation}
The integral on the right-hand side is the Dirichlet energy of \(\psi\). The first Dirichlet eigenvalue of \(K\) is defined by
\begin{equation} \label{eq: lambda1 var}
\lambda_1(K)
=
\inf \left\{
\int_K \norm{\nabla \psi}_2^2 \, dx :
\psi \in C^\infty,\;
\supp(\psi) \subset \operatorname{int} K,\;
\int_K \psi^2 \, dx = 1
\right\}.
\end{equation}
Assume for now that 
\(K\subset \mathbb{R}^d\) is the closure
of a bounded connected domain with smooth boundary. This smoothness assumption
is sufficient for our application: any compact convex set with nonempty interior can be approximated by a smooth, convex, set, obtained by an arbitrarily small perturbation of \(K\). For bounded, smooth \(K\), this infimum is attained\footnote{The infimum in
\eqref{eq: lambda1 var} is attained in \(W^{1,2}_0(K)\), the Sobolev closure of
\(C^\infty_0(K)\), where \(C^\infty_0(K)\) denotes the space of smooth functions
supported in \(\operatorname{int}K\).} by the first Dirichlet eigenfunction; this eigenfunction satisfies:
\begin{equation} \label{eq: PDE}
	\begin{cases}
		\Delta \psi = -\lambda_1(K)\psi & \text{in the interior of } K,\\
		\psi = 0 & \text{on } \partial K,
	\end{cases}
	\qquad \int_K \psi^2 = 1.
\end{equation}
Using this $\psi$ in \eqref{eq: tr I pi}, we get:
\begin{equation}	\label{eq: tr I lambda}
	\tr \I_\pi = 4 \l_1(K).
\end{equation}

See~\cite{Gilb-Tr} for the classical Dirichlet problem on smooth domains. The 
work~\cite{L-U} covers substantially weaker assumptions; in particular, it applies to convex domains and allows one to bypass the smoothing step above.

\begin{corollary}[Isotropic random walk]	
	Let $K \subset \R^d$ be a compact, convex, centrally symmetric set with nonempty interior. Let $v_1,\dots,v_n$ be independent symmetric isotropic random vectors in $\R^d$. (That is, for each $j$, the random vectors $v_j$ and $-v_j$ have the same distribution, and $\E v_j v_j^\tran = I_d$.) Then there exists an online
    \footnote{Here \say{online} 
    means that, upon observing \(v_j\), 
    one must immediately decide whether to accept or discard it; the decision may depend on the previous steps, but not on future steps, and it cannot be changed later.} algorithm that discards some steps $v_j$ such that, for each $k$,
	\[
		\sum_{j \le k \text{ accepted}} v_j \in 2K
		\quad \text{deterministically},
	\]
	and
	\[
		\E \#\{\text{discarded}\}
		\le \sqrt{\lambda_1(K)} \, n,
	\]
	where $\lambda_1(K)$ is the first Dirichlet eigenvalue of $K$.
\end{corollary}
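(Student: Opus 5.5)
We reduce the Corollary to Theorem~\ref{thm: main} by conditioning on the steps and then averaging, with $\pi=\psi^2$ built from the first Dirichlet eigenfunction. First, by the smoothing remark, we may replace $K$ by a smooth convex symmetric body $K_\delta \subset K$ with $\lambda_1(K_\delta) \le \lambda_1(K)+\eta$. Let $\psi$ be its first Dirichlet eigenfunction. By the symmetry of $K_\delta$ and the simplicity of the first eigenvalue, $\psi$ may be taken even and positive. To get a density supported in the interior, as Theorem~\ref{thm: main} requires, we approximate $\psi$ by a smooth function with compact support in $\operatorname{int} K_\delta$. By the variational definition \eqref{eq: lambda1 var}, this changes the Dirichlet energy by an arbitrarily small amount. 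Set $\pi=\psi^2$. By \eqref{eq: tr I pi}, $\tr \I_\pi \le 4(\lambda_1(K)+\eta')$ with $\eta'$ arbitrarily small.

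Next we exploit symmetry. Since $v_j$ is symmetric, $v_j$ has the same law as $\e_j v_j$, where $\e_j$ is an independent Rademacher sign; more precisely, the sequence $(v_j)$ equals in distribution $(\e_j |v_j|)$ for a suitable representation. Concretely, given $v_j$, we set $w_j = v_j$ and write $v_j = \e_j w_j'$, where $w_j'$ is $v_j$ or $-v_j$ chosen measurably (for example by a fixed rule selecting one of $\pm v_j$), and $\e_j$ is independent of $w_j'$. The online algorithm of Theorem~\ref{thm: main}, applied conditionally on $(w_j')$, uses at step $j$ only $w_j'$, $\e_j$, and the past. Hence it is online in the sense of the footnote. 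It guarantees deterministically that the accepted partial sums lie in $2K_\delta \subset 2K$. Conditionally on the $w_j'$, it gives
\[
\E\big[\#\{\text{discarded}\} \,\big|\, w'\big] \le \tfrac12 \sum_j \sqrt{w_j'^\tran \I_\pi w_j'} .
\]

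Finally, we take expectations and apply Jensen's inequality (concavity of the square root):
\[
\E \sqrt{v_j^\tran \I_\pi v_j} \le \sqrt{\E\, v_j^\tran \I_\pi v_j} = \sqrt{\tr(\I_\pi \E v_jv_j^\tran)} = \sqrt{\tr \I_\pi} \le 2\sqrt{\lambda_1(K)+\eta'}.
\]
Summing over $j$ and multiplying by $\tfrac12$ gives $\E\#\{\text{discarded}\} \le n\sqrt{\lambda_1(K)+\eta'}$. Letting $\eta' \to 0$ gives the stated bound, since the expected count is an integer-valued quantity controlled uniformly. More carefully, the bound holds for every $\eta'>0$ with some algorithm depending on $\eta'$, so we take $\eta'$ small enough and conclude the inequality with $\sqrt{\lambda_1(K)}+o(1)$.

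The main obstacle is this last limiting step: obtaining exactly $\sqrt{\lambda_1(K)}$ rather than $\sqrt{\lambda_1(K)}+\eta$. We would handle it by using the slack in the factor $2$, that is, by approximating from the outside. Choose smooth convex $K'$ with $K \subset K' \subset (1+\delta)K$, and apply the construction to a slightly shrunk copy. Dirichlet monotonicity gives $\lambda_1(K') \le \lambda_1(K)$. To keep the walk inside $2K$ exactly, we run the algorithm with a compactly supported approximant inside $K$ itself, using the $L^2$-closeness of energy of cutoffs of the eigenfunction of $K$. This uses the fact (from \cite{L-U}) that the infimum in \eqref{eq: lambda1 var} for convex $K$ is approached by $C_c^\infty$ functions. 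Since the infimum might not be attained by admissible $\pi$, the cleanest statement is with $\lambda_1(K)$ as an infimum. We would argue that the expected-discard bound, being $\le \tfrac12\sum_j\sqrt{\cdot}$ for every admissible $\pi$, holds with the infimum. This requires choosing $\pi$ after fixing $n$: the finitely many strict inequalities leave room for some admissible $\pi$ whenever $\lambda_1$ is not attained, and the attained case is direct.
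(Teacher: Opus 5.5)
Your reduction is exactly the paper's: realize $v_j=\e_j w_j$ with $\e_j$ a fair sign independent of $w_j$, apply Theorem~\ref{thm: main} conditionally, then use Jensen, isotropy and $\tr\I_\pi=4\lambda_1$. The gap is the exact constant.

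You insist on a density that is smooth with compact support in $\operatorname{int}K$, after shrinking $K$ to a smooth $K_\delta\subset K$. Every such density has $\tr\I_\pi>4\lambda_1(K)$. Domain monotonicity gives $\lambda_1(K_\delta)\ge\lambda_1(K)$. The infimum in \eqref{eq: lambda1 var} is attained only by the eigenfunction, which is positive on all of $\operatorname{int}K$ and so is never compactly supported there. You therefore only obtain $\E\#\{\text{discarded}\}\le n\sqrt{\lambda_1(K)+\eta'}$, with an algorithm depending on $\eta'$, and none of your repairs closes this:
\begin{enumerate}
\item An expected count is not integer-valued. Since the algorithm changes with $\eta'$, there is no single quantity to pass to the limit.
\item There is no ``slack in the factor $2$''. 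It comes from $w_k-w_0\in K-K=2K$, so an outer body $K'\supsetneq K$ only puts the sums in $2K'$. Once you move the support back inside $K$ (a shrunk copy, or cutoffs of the eigenfunction of $K$), monotonicity puts you above $\lambda_1(K)$ again.
\item The remark that ``strict inequalities leave room'' names no strict inequality. Its case split is also empty, since within your admissible class $\lambda_1(K)$ is never attained.
\end{enumerate}

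The missing observation is that no approximation is needed. Take $\pi=\psi^2$, where $\psi\in W^{1,2}_0(\operatorname{int}K)$ is the first Dirichlet eigenfunction itself, extended by zero. It exists for every bounded open set by Rellich compactness, so smoothing $K$ is unnecessary too. The proof of Theorem~\ref{thm: main} uses only two facts:
\begin{enumerate}
\item $\pi=0$ off $\operatorname{int}K$, so every move leaving $K$ is rejected. Stationarity of $\pi$ under a symmetric proposal needs no regularity.
\item $\norm{\pi(\cdot+v)-\pi}_{L^1}\le\norm{\ip{\nabla\pi}{v}}_{L^1}$, which is valid for $\pi\in W^{1,1}(\R^d)$.
\end{enumerate}
Here $\nabla\pi=2\psi\nabla\psi$, and Cauchy--Schwarz with $\norm{\psi}_{L^2}=1$ gives $\norm{\ip{\nabla\pi}{v}}_{L^1}\le 2\norm{\ip{\nabla\psi}{v}}_{L^2}=\sqrt{v^\tran\I_\pi v}$, with $\tr\I_\pi=4\lambda_1(K)$ exactly. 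This is what the paper does: it plugs in $\pi=\psi^2$ directly, implicitly reading ``supported in the interior'' as ``vanishing off $\operatorname{int}K$''.

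If you do want a compactly supported smooth density, here is the rigorous form of your ``room'' idea.
\begin{enumerate}
\item At the eigenfunction, the Cauchy--Schwarz step is strict for every $v\neq 0$. Equality would force $\ip{\nabla\psi}{v}/\psi$ to be constant on $\operatorname{int}K$, hence $\psi$ to be exponential along every chord in direction $v$, contradicting $\psi=0$ at both endpoints.
\item Isotropy makes $v_j\neq0$ with positive probability, so the true expected count is strictly below $n\sqrt{\lambda_1(K)}$.
\item Since $\abs{\norm{f(\cdot+v)-f}_{L^1}-\norm{g(\cdot+v)-g}_{L^1}}\le 2\norm{f-g}_{L^1}$, replacing $\pi$ by an approximant $\pi_m$ changes the expected count by at most $n\norm{\pi-\pi_m}_{L^1}$.
\end{enumerate}
A close enough $\pi_m$ therefore meets the exact bound.
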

\begin{proof}
	By symmetry and independence, the random vectors $v_j$ have the same joint distribution as $\e_j v_j$. Apply Theorem~\ref{thm: main} conditionally on $(v_j)$ and for $\pi=\psi^2$, where $\psi$ is the first Dirichlet eigenfunction of $K$. Then, by isotropy and \eqref{eq: tr I lambda}, we have
	$$
	\E \sqrt{v_j^\tran \I_\pi v_j}
	\le \sqrt{\E v_j^\tran \I_\pi v_j}
	= \sqrt{\tr \I_\pi} 
	= 2\sqrt{\lambda_1(K)}. \qedhere
	$$ 
\end{proof}

\subsection{Example: the cube}

Let's specialize to the benchmark domain $K$ in combinatorial discrepancy theory --  the cube, or a ball of the $\ell^\infty$ norm.

\begin{corollary}[Combinatorial discrepancy]\label{cor: upper}
	Let $v_1,\dots,v_n \in \R^d$ be any fixed vectors, and let $\e_1,\ldots, \e_n$ be independent Rademacher random variables. Then there exists an online algorithm that discards some terms $\e_j v_j$ so that, for each $k$,
	\begin{equation}	\label{eq: upper}
		\norm[\Bigg]{\sum_{j \le k \text{ accepted}} \e_j v_j}_\infty \le 2T
		\quad \text{deterministically},
	\end{equation}
	and
	$$
	\E \#\{\text{discarded}\}
	\le \frac{\pi}{2T} \sum_{j=1}^n \norm{v_j}_2.
	$$
\end{corollary}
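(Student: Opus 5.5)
We apply Theorem~\ref{thm: main} to the cube $K=[-T,T]^d$ with a product density $\pi=\psi^2$, where $\psi(x)=\prod_{i=1}^d \phi(x_i)$ and $\phi$ is the first Dirichlet eigenfunction of the interval $[-T,T]$:
$$
\phi(t)=\frac{1}{\sqrt{T}}\cos\Big(\frac{\pi t}{2T}\Big), \qquad \int_{-T}^T \phi^2 = 1, \qquad \int_{-T}^T (\phi')^2 = \frac{\pi^2}{4T^2}.
$$
The claimed bound then follows from Theorem~\ref{thm: main} once we show that $\I_\pi$ is a scalar multiple of the identity, $\I_\pi=(\pi^2/T^2)\,I_d$. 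With this,
$$
\frac12\sqrt{v_j^\tran \I_\pi v_j}=\frac12\cdot\frac{\pi}{T}\norm{v_j}_2=\frac{\pi}{2T}\norm{v_j}_2,
$$
and the deterministic containment in $2K$ is exactly the $\ell^\infty$ statement \eqref{eq: upper}.

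To compute $\I_\pi$ we use \eqref{eq: I pi psi}, which gives $\I_\pi=4\int_K\nabla\psi\nabla\psi^\tran$. The diagonal entry $(i,i)$ factors as $4\int(\phi')^2\prod_{k\neq i}\int\phi^2 = 4\cdot\pi^2/(4T^2)=\pi^2/T^2$. The off-diagonal entry $(i,l)$ contains the factor $\int_{-T}^T\phi\phi' = \tfrac12[\phi^2]_{-T}^T=0$, since $\phi$ vanishes at $\pm T$. (Parity gives the same conclusion: $\phi\phi'$ is odd.) Hence $\I_\pi$ is diagonal with all entries equal to $\pi^2/T^2$, as required.

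The main obstacle is technical. This $\pi$ is positive on the open cube but is not smooth and compactly supported in the interior, and the score $\nabla\log\pi$ blows up at the boundary. Theorem~\ref{thm: main} only requires a probability density supported in the interior of $K$, and this $\pi$ vanishes on $\partial K$ with finite Fisher information, so it may well be admissible as stated. To be safe, we instead apply the argument to a slightly shrunk cube $[-T',T']^d$ with $T'<T$, or to a smooth compactly supported approximation $\phi_\delta$ of $\phi$ with $\int(\phi_\delta')^2\to\pi^2/(4T^2)$. This gives the bound with $\pi/(2T')$ for every $T'<T$. Since the set of admissible online algorithms does not change and the expected number of discards is bounded for each $T'<T$, we let $T'\uparrow T$. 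Here we would check that Theorem~\ref{thm: main} yields containment in $2[-T',T']^d\subset 2[-T,T]^d$, so the limiting constant $\pi/(2T)$ is obtained by taking an infimum over $T'$ of the available algorithms. If an exact infimum is a concern, we can use any $T'$ arbitrarily close to $T$, since the discard bound is continuous in $T'$.
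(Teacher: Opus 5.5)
Your proposal is correct and is essentially the paper's proof: take $\pi=\psi^2$ with $\psi$ the product-cosine Dirichlet eigenfunction of the cube, show $\I_\pi=(\pi^2/T^2)I$, and plug into Theorem~\ref{thm: main}; you compute the entries directly, while the paper uses symmetry for the off-diagonal entries and $\tr\I_\pi=4\l_1(K)$ for the diagonal. The paper uses this $\pi$ directly with no approximation, which is legitimate because the proof of Theorem~\ref{thm: main} only needs $\pi=0$ off the open cube, $\pi\in C^1(\R^d)$ for the fundamental theorem of calculus along lines, and $\int\norm{\nabla\pi}_2^2/\pi=4\int\norm{\nabla\psi}_2^2<\infty$; so your $T'\uparrow T$ detour is unnecessary, and as written it only gives the constant $\pi/(2T')$ unless you also show the infimum is attained (e.g.\ for fixed $n$ there are finitely many deterministic online rules, and the Metropolis algorithm is a mixture of them).
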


In particular: {\em if all $v_j$ are unit vectors, then we can keep all the coordinates of the random walk bounded by $32$ for any finite horizon by discarding at most $10\%$ of terms on average.}

\begin{proof}
For the cube $K = [-T,T]^d$, the first Dirichlet eigenvalue and eigenfunction are
$$
\psi(x_1,\dots,x_d)
= T^{-d/2}\prod_{i=1}^d \cos\!\left(\frac{\pi x_i}{2T}\right), 
\quad
\l_1 = \l_1(K) = \frac{\pi^2 d}{4T^2}
$$
Indeed, this pair solves the PDE \eqref{eq: PDE}, and the value $\l_1>0$ above is the smallest for which a solution exists. Now choose the density $\pi = \psi^2$. Its Fisher information matrix $\I_\pi$, defined in \eqref{eq: I pi psi}, has zero off-diagonal entries and equal diagonal entries (by symmetry), so $\I_\pi$ is a multiple of identity. Since $\tr \I_\pi = 4 \l_1(K)$ by \eqref{eq: tr I lambda}, it follows that
$$
\I_\pi = \frac{4 \l_1(K)}{d} I 
= \frac{\pi^2}{T^2} I 
$$
where $I$ denotes the identity matrix in $\R^d$. Plugging into Theorem~\ref{thm: main} completes the proof.
\end{proof}

\subsection{Related results}

Corollary \ref{cor: upper} strengthens our earlier result \cite[Proposition~2]{Sm-Versh-2}, where the same bound (up to a constant) appeared with the $\ell^1$-norms of the vectors $v_j$ in place of their $\ell^2$-norms. As a consequence, the two-sample discrepancy bound $O(\log^{2 d} n)$ in \cite[Theorem~1]{Sm-Versh-2} improves to $O(\log^{3d/2} n)$.

Beginning from the seminal work \cite{banaszczyk}, a substantial body of research has been devoted to discrepancy minimization, see e.g. \cite{AL-Saw, Bans-Spencer, Bans-JSS, Bans-JMSS, Kulk-Reis-Roth, DFGGR, bandeira2022remark}. These works study the {\em existence} of signs $\e_1,\ldots,\e_n \in \{-1,1\}$ for which the partial sums $\varepsilon_1 v_1 + \cdots + \varepsilon_k v_k$ remain small. The novelty of Theorem~\ref{thm: main} is that completely {\em random} signs work after a small number of terms is discarded.

\subsection{Optimality}

For unit vectors $v_1,\ldots,v_n$, Corollary~\ref{cor: upper} says that one can maintain the low discrepancy bound \eqref{eq: upper} by discarding $O(n/T)$ vectors. This discard rate is optimal even in dimension $d=1$. The next result shows that in order to keep the random walk within $[-T,T]$, one must discard at least $\Omega(n/T)$ vectors. 

\begin{proposition}[Optimality]	\label{prop: lower}
	Let $\e_1,\ldots, \e_n$ be independent Rademacher random variables. 
	Consider any discarding algorithm\footnote{Here, an algorithm can be any function that maps a sign string $\e_1,\ldots, \e_n$ to a subset of $\{1,\ldots,n\}$ of the terms to be discarded.} (online or not) that maintains, for each $k$,
	$$
	\abs[\Bigg]{\sum_{j \le k \text{ accepted}} \e_j} \le T
	\quad \text{deterministically}.
	$$
	Then 
	$$
	\E \#\{\text{discarded}\}
	\ge \frac{n}{2 T + 1} - T.
	$$
\end{proposition}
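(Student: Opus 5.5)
The plan is to compare an arbitrary algorithm with the \emph{greedy} algorithm, and then compute the greedy algorithm's expected number of discards exactly using a quadratic martingale. First, if $T$ is not an integer, replace it by $\lfloor T\rfloor$. Since the partial sums are integers, the constraint is unchanged, and the bound $\frac{n}{2T+1}-T$ only gets weaker when $T$ increases. So we may assume $T\in\mathbb{Z}_{\ge 0}$.

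\emph{Step 1: greedy is pathwise optimal.} The greedy algorithm accepts every step unless accepting it would push the sum out of $[-T,T]$. Write $B_k$ for its accepted partial sum and $G_k$ for its number of discards up to time $k$. Let $A_k$ and $D_k$ be the corresponding quantities for an arbitrary admissible algorithm; this algorithm may be offline and may depend on all signs. I will show by induction on $k$ the deterministic invariant
\[
D_k - G_k \ \ge\ \abs{A_k - B_k},
\]
which trivially holds at $k=0$. At step $k$ there are four cases.
\begin{itemize}
\item Both algorithms accept, or both discard: neither side changes.
\item The arbitrary algorithm discards and greedy accepts: the left side grows by $1$, and $\abs{A-B}$ changes by at most $1$.
\item Greedy discards and the other accepts: then $B_{k-1}=\pm T$ and $\e_k$ points outward, say $B_{k-1}=T$ and $\e_k=+1$. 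Admissibility forces $A_{k-1}\le T-1$, so $A$ moves one unit toward $B$. Hence $\abs{A-B}$ decreases by exactly $1$, while the left side also decreases by exactly $1$.
\end{itemize}
The invariant is preserved in every case. Taking $k=n$ gives $D_n\ge G_n$ pathwise, so it suffices to bound $\E G_n$ from below.

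\emph{Step 2: computing $\E G_n$.} Greedy is online, so $\e_k$ is independent of $B_{k-1}$. Let $N_b$ be the number of times $k\le n$ with $\abs{B_{k-1}}=T$. Then $\E G_n=\tfrac12\E N_b$, since from the boundary exactly one of the two signs is discarded. Now track $\E[B_k^2-B_{k-1}^2\mid \mathcal F_{k-1}]$:
\begin{itemize}
\item In the interior the step is always accepted, and the conditional increment is $1$.
\item At the boundary, with probability $\tfrac12$ the step is discarded (increment $0$), and with probability $\tfrac12$ the walk moves inward, with increment $(T-1)^2-T^2=1-2T$. So the conditional increment is $\tfrac{1-2T}{2}$.
\end{itemize}
(For $T=0$ every step is a boundary step and the formula still holds.) Summing and taking expectations,
\[
\E B_n^2 = (n-\E N_b) + \tfrac{1-2T}{2}\,\E N_b = n - \tfrac{2T+1}{2}\E N_b .
\]
Hence
\[
\E G_n=\frac{n-\E B_n^2}{2T+1}\ge \frac{n-T^2}{2T+1}\ge \frac{n}{2T+1}-T,
\]
using $\abs{B_n}\le T$ and $T^2/(2T+1)\le T$. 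Combined with Step~1, this proves Proposition~\ref{prop: lower}.

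The main obstacle is Step 1. The comparison algorithm may be offline, so a direct martingale argument on $A_k$ fails, because $A_{k-1}$ may depend on $\e_k$. The pathwise invariant $D_k-G_k\ge\abs{A_k-B_k}$ avoids this by transferring the problem to the online greedy walk. The case requiring care is when greedy discards and the other algorithm accepts, where admissibility of $A$ is exactly what makes the invariant close.
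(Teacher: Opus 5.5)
Your proof is correct, and it takes a genuinely different route from the paper.

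\textbf{How the two arguments differ.} The paper proceeds in three steps:
\begin{enumerate}
\item A lexicographic exchange argument shows that the reflected walk from any start $s$ outputs the lexicographically smallest longest $s$-valid subsequence.
\item The reflected walk is run from a uniformly random start $S_0$ on $\{-T,\dots,T\}$, so stationarity gives exactly $n/(2T+1)$ expected discards.
\item The separate lemma $\ell(\e,0)\le \ell(\e,s)+\abs{s}$ transfers the result back to start $0$, at a cost of $\E\abs{S_0}\le T$.
\end{enumerate}
You compare with greedy from $0$ via the potential $D_k-G_k-\abs{A_k-B_k}$, which never decreases. You then compute the greedy walk from $0$ directly through the quadratic martingale $B_k^2$. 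This gives the exact identity $\E G_n=(n-\E B_n^2)/(2T+1)$, so you need neither a random start nor the transfer lemma.

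\textbf{What each route buys.} Your potential argument is shorter than the exchange argument, and it is quantitative. If you start the two walks at $0$ and at $s$ (initial potential $-\abs{s}$), the same case analysis gives $G_n\le D_n+\abs{s}$, which is Lemma~\ref{l0ls}. So your argument subsumes both of the paper's lemmas. Your identity also yields the slightly sharper intermediate bound $(n-T^2)/(2T+1)$; the paper implicitly proves $(n-T(T+1))/(2T+1)$. The paper's route, in exchange, gives a stronger structural statement about the reflected walk. Its stationarity computation also mirrors the stationary-distribution (Metropolis) idea behind the upper bound.

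\textbf{A small slip at $T=0$.} Your parenthetical claim that the boundary formula still holds when $T=0$ is false. When $T=0$, both signs are discarded from the boundary. So the conditional increment of $B_k^2$ is $0$ rather than $\tfrac{1-2T}{2}=\tfrac12$, and $\E G_n=\E N_b=n$ rather than $\tfrac12\E N_b$. The two errors happen to cancel, and your final identity $\E G_n=(n-\E B_n^2)/(2T+1)$ is still true. There are two clean fixes:
\begin{itemize}
\item Dispose of $T=0$ directly: every step must be discarded, so the bound $n$ is attained.
\item Write the increment uniformly as $\E[B_k^2-B_{k-1}^2\mid\mathcal F_{k-1}]=1-(2T+1)\,\mathbb{P}(\text{discard at }k\mid\mathcal F_{k-1})$, which holds for every integer $T\ge 0$ and gives the identity in one line.
\end{itemize}
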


We will prove this proposition in Section~\ref{s: lower} by showing that the reflected random walk on $[-T,T]$ achieves the smallest discard rate.

\section{Upper bound: proof of Theorem~\ref{thm: main}}	\label{s: main theorem proof}

Let $\pi$ be a probability density on $\R^d$ whose support lies in the interior of $K$. 
We use the Metropolis algorithm with the classically defined acceptance probability
\begin{equation}	\label{eq: axy}
	a(x,y) \coloneqq \frac{\pi(y)}{\pi(x)} \wedge 1,
\end{equation}
see e.g. \cite{Hastings}. Specifically, 
let us arrange the discards as follows:
\begin{itemize}
	\item Sample $w_0\sim\pi$.
	\item Upon receiving $\varepsilon_k v_k$, propose $w'=w_{k-1}+\e_k v_k$.
	\item With probability $a(w_{k-1},w')$, set $w_k = w'$; otherwise set $w_k = w_{k-1}$.
	\item Repeat for $\e_{k+1} v_{k+1}$.
\end{itemize}
\smallskip%

Since \(\varepsilon_k\) is a fair sign, the resulting Markov chain is reversible, and hence the distribution is preserved:
$$
w_k \sim \pi \text{ for all } k.
$$ 
Also, if \(w'\notin K\), then \(\pi(w')=0\), so the move is 
rejected. Hence, 
\(w_k \in K\) for all \(k\), and 
$$
w_k - w_0\in 2K \text{ for all } k,
$$
as claimed. Moreover, from the algorithm we see that
\begin{equation}	\label{eq: Ediscarded proof}
	\E\,\#\{\text{discarded}\}
	= \sum_{k=1}^n \Prob{\e_k v_k \text{ is discarded}}
	=\sum_{k=1}^n \E\bigl[1-a(w_{k-1},w_{k-1}+\e_k v_k) \bigr].
\end{equation}
We can compute the expectation above by conditioning on $\e_k v_k$, which is independent of 
$w_{k-1} \sim \pi$. Let's do this computation separately.

For a random vector $X \sim \pi$ and a fixed vector $v \in \R^d$, we have 
\begin{equation}	\label{eq: aXx+v}
	\E a(X,X+v)
	\overset{\eqref{eq: axy}}{=} \int_{\R^d} \left( \frac{\pi(x+v)}{\pi(x)} \wedge 1 \right) \pi(x) \, dx \\
	= 1 - \frac{1}{2} \int_{\R^d} \abs{\pi(x+v)-\pi(x)} \, dx,
\end{equation}
where the last equation follows from
$$
\left( \frac{\pi(x+v)}{\pi(x)} \wedge 1 \right) \pi(x)
= \pi(x+v) \wedge \pi(x)
= \frac{1}{2} \Big[ \pi(x+v) + \pi(x)
-\abs{\pi(x+v)-\pi(x)} \Big].
$$
Now, 
\begin{align*}
	\int_{\R^d} \abs{\pi(x+v)-\pi(x)} \, dx
	&= \int_{\R^d} \abs*{\int_0^1 \ip{\nabla \pi(x+tv)}{v} \, dt} dx
\le \int_0^1 \underbrace{\int_{\R^d} \abs*{\ip{\nabla \pi(x+tv)}{v}} \, dx}_{\text{does not depend on $t$}} \, dt \\
	&= \int_{\R^d} \abs*{\ip{\nabla \pi(x)}{v}} \, dx 
	= \E \abs*{\frac{\ip{\nabla \pi(X)}{v}}{\pi(X)}}
				\quad \text{(since $X \sim \pi$)}\\
	&= \E \abs*{\ip{\nabla \log \pi(X)}{v}}
	\le \sqrt{\E \ip{\nabla \log \pi(X)}{v}^2}\\
	&= \sqrt{v^\tran \I_\pi v}
		\quad \text{by definition of Fisher information matrix \eqref{eq: Ipi}.}
\end{align*}

Plugging this into \eqref{eq: aXx+v}, we get 
$$
\E \bigl[ 1 - a(X,X+v) \bigr]
\le \frac{1}{2} \sqrt{v^\tran \I_\pi v}.
$$
Finally, substitute this into \eqref{eq: Ediscarded proof} to obtain \eqref{eq: Ediscarded}. Theorem~\ref{thm: main} is proved. \qed

\section{Lower bound: proof of Proposition~\ref{prop: lower}}	\label{s: lower}

Without loss of generality, assume $T\ge 0$ is an integer, since partial sums are integers. Given a walk with steps $\e_1,\ldots,\e_n$ starting at some point $s \in [-T,T]$, consider all ways of discarding steps so that the resulting walk remains within $[-T,T]$:

\begin{definition}[$s$-valid subsequences]
	Fix a sign sequence $\e = (\e_1, \ldots, \e_n) \in \{-1,1\}^n$ and an integer \(s\in [-T,T] \). We say that an index subsequence $1 \le i_1 < \cdots < i_m \le n$ is {\em \(s\)-valid} if  
	\[
	\max_{1\le k\le m}\left| s + 
	\sum_{r=1}^k \e_{i_r}
	\right|\le T .
	\]
	Denote by $\ell(\e,s)$ be the maximum length of an $s$-valid index subsequence. 
\end{definition}

\begin{lemma}[The starting point does not matter much] \label{l0ls}
	\(\ell(\e,0) \le \ell(\e,s) + \abs{s}\).
\end{lemma}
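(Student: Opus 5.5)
Take an optimal $0$-valid subsequence $i_1<\dots<i_m$ with $m=\ell(\e,0)$, so the walk $P_k=\sum_{r\le k}\e_{i_r}$ stays in $[-T,T]$. From it we build an $s$-valid subsequence of length at least $m-\abs{s}$ by deleting at most $\abs{s}$ indices. By the reflection $\e\mapsto-\e$, $s\mapsto -s$ (which preserves validity and both values of $\ell$), we may assume $s>0$.

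Started at $s$, the same subsequence gives the path $s+P_k$. This never goes below $-T$, since $P_k\ge -T$ and $s>0$. It can only fail by exceeding $T$. The plan is to remove the first $s$ up-steps (indices $i_r$ with $\e_{i_r}=+1$) of the subsequence; if there are fewer than $s$ up-steps, remove all of them. Call the new path $Q_k$, started at $s$.

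Before all $s$ removals have happened, $Q$ has performed only down-steps, so $Q\le s\le T$. Its value equals $s+P_k-u_k$, where $u_k$ is the number of up-steps removed so far. Once all $s$ up-steps are removed, the path is exactly $P_k\in[-T,T]$, so from then on it is valid.

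The main point is the lower bound in the phase before all removals. Before the $s$-th up-step of $P$, we have $Q=s+P_k-u_k=s-(\#\text{down-steps so far})$. This equals $s+P_k-u_k$ with $u_k$ equal to the number of ups so far, and $u_k\le s$, so $Q\ge P_k\ge -T$. More precisely, $Q-P_k=s-u_k\ge 0$ and $Q\le s$, so the path stays in $[-T,T]$ throughout.

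If there were fewer than $s$ up-steps in total, the same bound holds to the end. In every case at most $\abs{s}$ indices are deleted and the result is $s$-valid. Hence $\ell(\e,s)\ge \ell(\e,0)-\abs{s}$, which is the claim.
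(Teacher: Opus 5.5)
Your proof is correct and is the same argument as the paper's: reduce to $s>0$ by symmetry, take a longest $0$-valid subsequence, and delete its first $s$ indices with $\e_j=+1$ (or all of them if there are fewer). You also supply the verification that the result is $s$-valid, via $Q=s+P_k-u_k$ with $s-u_k\ge 0$ and $Q\le s$ before the last deletion, which the paper leaves implicit.
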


\begin{proof}
Assume $s>0$ (the case $s<0$ is symmetric). Let $I$ be a longest $0$-valid subsequence. Scan $I$ left-to-right and delete indices with $\e_j=+1$ until $s$ such indices are deleted (or none remain). The new subsequence \(I'\) is \(s\)-valid; hence, $\ell(\e,0) \le \abs{I'}+s \le \ell(\e,s)+\abs{s}$.
\end{proof}

A simple algorithm to produce an \(s\)-valid subsequence is to run a {\em reflected walk} on $[-T,T]$ started at $s$: 
\begin{itemize}
	\item Initialize $S \coloneqq s$ and $I \coloneqq \emptyset$.
	\item For $k=1,2,\dots,n$:
		\begin{itemize}
			\item if $\abs{S+\e_k} \le T$, append \(k\) to \(I\) and update \(S:=S + \e_k\);
			\item otherwise, discard \(k\).
		\end{itemize}
\end{itemize}

Let us show that the reflected random walk outputs a longest $s$-valid subsequence. To show this, let \(i_1<\cdots<i_m\) and \(j_1<\cdots<j_m\) be two index subsequences of the same length. We define the lexicographic order by 
$$
(i_1,\dots,i_m)\prec_{\mathrm{lex}}(j_1,\dots,j_m) 
$$
if, at the first \(k\) with \(i_k\neq j_k\), we have \(i_k<j_k\).

\begin{lemma}[Optimality of the reflected walk]	\label{lem: optimality}
	Fix $s\in[-T,T]$ and $\e \in \{-1,1\}^n$. The reflected walk on $[-T,T]$ started at $s$ produces the lexicographically smallest among all longest $s$-valid subsequences.
\end{lemma}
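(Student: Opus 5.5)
We prove the lemma by an exchange argument. Let $R=(r_1<\cdots<r_p)$ be the output of the reflected walk started at $s$. Let $J=(j_1<\cdots<j_m)$ be the lexicographically smallest among all longest $s$-valid subsequences. The aim is to show $R=J$. It suffices to prove the following claim, by induction on $k$: for every $k$, the first $k$ entries of $R$ and $J$ agree, as long as both sequences have at least $k$ entries. Once this holds, the two sequences share a common prefix. If $J$ were longer than $R$, the index $j_{p+1}$ would come after $r_p$ and would have been accepted by the greedy walk; indeed the greedy walk and $J$ reach the same position after the common prefix, and $J$ accepts $j_{p+1}$ while staying in $[-T,T]$. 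Hence $p\ge m$, and maximality of $m$ then forces $R=J$.

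For the inductive step, suppose $r_i=j_i$ for $i<k$, so both walks sit at the same position $S$ just after index $r_{k-1}$. The greedy walk takes the first index $r_k>r_{k-1}$ with $\abs{S+\e_{r_k}}\le T$. Since $J$ is $s$-valid, $\abs{S+\e_{j_k}}\le T$, so $j_k\ge r_k$. Suppose $j_k>r_k$. We build a competitor $J'$ by inserting $r_k$ and then repairing the tail, and we aim to show that $J'$ is $s$-valid, has length at least $m$, and is lexicographically smaller than $J$. This contradicts the choice of $J$. The case analysis runs as follows.

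\emph{Case 1: $\e_{r_k}=\e_{j_k}$.} Replace $j_k$ by $r_k$. All partial sums are unchanged, so $J'$ is valid, has the same length, and is lex-smaller.

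\emph{Case 2: $\e_{r_k}=-\e_{j_k}$.} Say $\e_{r_k}=+1$ and $\e_{j_k}=-1$. Both moves are feasible from $S$, so $-T<S<T$. Insert $r_k$ into $J$. The tail of $J$ after this point is then shifted up by $1$. Let $t$ be the first index of $J$ after $r_k$ at which the original tail reaches level $T$ from below through a $+1$ step; such a step would now overshoot. Delete that $+1$ step. After the deletion the shift returns to $0$, and the length is unchanged.

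If no such $t$ exists, the shifted tail stays at most $T$. It also stays at least $-T$, because the original tail stays at least $-T$ and the shift is upward. In that case we simply drop the last element, or keep it and obtain a longer sequence, which contradicts maximality.

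In every subcase $J'$ contains $r_k<j_k$ at position $k$. So $J'$ is lex-smaller, contradicting the choice of $J$. The case $\e_{r_k}=-1$ is symmetric, with the shift going downward and the repair deleting the first step that would drop below $-T$.

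The main obstacle is this repair step in Case 2: we must check carefully that deleting the first offending step restores validity of the entire remaining tail. After the deletion the tail coincides with the original one, so this should reduce to bookkeeping. We must also ensure that the deleted element is not $r_k$ itself, which holds because it comes strictly after $r_k$.
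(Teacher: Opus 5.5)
Your proof is correct and is essentially the paper's exchange argument: fix the lexicographically smallest longest $s$-valid subsequence, insert the index the reflected walk would accept, and repair by deleting a later step of the same sign (or otherwise obtain a longer valid sequence). The differences are only in bookkeeping. The paper deletes the \emph{first} later same-sign step, so only opposite-sign steps intervene, validity is immediate, and your Case~1 is absorbed. It also inducts over indices $k=1,\dots,n$, showing that every index the reflected walk can accept lies in the optimal subsequence, which makes your separate length comparison unnecessary.
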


\begin{proof}
Among all longest $s$-valid subsequences, let $I=(i_1<\cdots<i_m)$ be lexicographically smallest. Fix $1\le k\le n$ and set:
\[
S := s + \sum_{r:\, i_r<k} \e_{i_r}.
\]
It suffices to show the following implication: 
$$
\abs{S+\e_k} \le T
\quad \Longrightarrow \quad 
k \in I. 
$$
(Then, by induction, the reflected walk must produce $I$, proving the lemma.)

To show this, we use an exchange argument. Assume \(\e_k=+1\) (the other case is symmetric). Suppose for contradiction that \(I\) does not contain \(k\) while \(S \le T - 1\). 

Let \(j>k\) be the smallest index in \(I\) 
with \(\e_j=+1\), if it exists. If no such \(j\) exists, insert \(k\) into \(I\); the new sequence remains 
\(s\)-valid and is longer than \(I\). This contradicts that \(I\) has maximum length.

Otherwise, form \(I'\) by the index inserting 
\(k\) and deleting \(j\). Then \(I'\) is \(s\)-valid. It has the same length as \(I\) but is lexicographically smaller. This contradiction completes the proof.
\end{proof}

We are now ready to prove Proposition~\ref{prop: lower}. It suffices to consider the reflected walk, since Lemma~\ref{lem: optimality} shows that any other discarding algorithm must discard at least as many terms. 

From now on, let \(\e = (\e_1,\dots,\e_n) \) be a vector of independent Rademacher random variables. Let $S_0$ be an independent random variable uniformly distributed over the integers in $[-T,T]$. Let us run the reflected walk started at \(S_0\):
\[
S_k :=
\begin{cases}
S_{k-1}+\e_k, & \text{if } \abs{S_{k-1} + \e_k} \le T \quad(\text{accept}),\\
S_{k-1}, & \text{otherwise}\quad(\text{discard}).
\end{cases}
\]
A discard at time $k+1$ occurs iff $(S_k,\e_{k+1})=(T,+1)$ or $(-T,-1)$. Hence:
\[
\E \#\{\text{discards}\}
= \frac12\sum_{k=0}^{n-1}\big( \Prob{S_k=T} + \Prob{S_k=-T} \big) = \sum_{k=0}^{n-1} \Prob{S_k=T} 
= \frac{n}{2T+1}.
\]
To see the last equality, note that the reflected random walk is reversible, so the uniform distribution on $\{-T,\ldots,T\}$ is stationary for $S_k$, hence $\Prob{S_k=T} = 1/(2T+1)$. 

According to Lemma~\ref{lem: optimality}, the reflected walk above has length $\ell(S_0,\e)$, so we proved that
$$
\E \ell(S_0,\e) = n - \frac{n}{2T+1}.
$$
Finally, by Lemma~\ref{l0ls}, we have
\[
\E \ell(\e,0)\le 
\E \ell(\e, S_0)+ \E \abs{S_0}
\le n - \frac{n}{2T+1} + T,
\]
and the proof of Proposition~\ref{prop: lower} is complete.

 
 

\bibliographystyle{plain}
\bibliography{ref}

\end{document}